\newcommand{\be}{\begin{eqnarray}}
\newcommand{\ee}{\end{eqnarray}}
\newtheorem{theo}{Theorem}%[section]
\newtheorem{numobs}{Numerical Observation}
\newcommand{\R}{\mathbb R}
\newcommand{\C}{\mathbb C}
\begin{document}

\title{Emergence of Unstable Modes\\ for Shock Waves in Ideal MHD}
\author{\it Heinrich Freist\"uhler\footnotemark[1], Felix Kleber\footnotemark[1], {\rm and} 
Johannes Schropp\footnotemark[1]}

%\author{\it Heinrich Freist\"uhler\footnotemark[1]\ ~ {\rm and}\ \  Blake Temple\footnotemark[2]}

% \large
\maketitle

\footnotetext[1]{Department of Mathematics, University of Konstanz, 78457 Konstanz, Germany}
%; Supported by DFG
%through its Excellence Grant to University of Konstanz.}
%
%\footnotetext[2]{Department of Mathematics, University of California, Davis, Davis CA, USA
%95616; Supported by NSF Applied Mathematics Grant Number DMS-070-7532.}

\begin{abstract}
This note studies classical magnetohydrodynamic shock waves in an 
inviscid fluidic plasma that is assumed to be a perfect conductor of heat as 
well as of electricity. For this mathematically prototypical material, it identifies 
a critical manifold in parameter space, across which slow classical MHD shock 
waves undergo emergence of a complex conjugate pair of unstable transverse modes.
% Two numerical schemes are described, which
%permit to (a) locate the critical manifold and (b) trace the emerging unstable
%eigenvalue. The numerics is anchored by an analytical consideration on the special case 
In the reflectionally symmetric case of parallel shocks, this emergence happens at the 
spectral value $\hat\lambda\equiv\lambda/|\omega|=0$, and the critical manifold possesses a simple explicit algebraic 
representation. 
Results of refined numerical treatment show that for only \emph{almost}\ parallel shocks the unstable
mode pair emerges from \emph{two}\ spectral values $\hat\lambda=\pm i\gamma,\gamma>0$.
\end{abstract}

%\newpage

%\setcounter{section}{-1}

\section{%{Slow and fast shock waves in ideal isothermal MHD}\label{Introduction}
%\setcounter{equation}{0}
%\subsection
%{\bf 
The equations of ideal isothermal MHD}
We consider ideal MHD in twodimensional space, 
\begin{equation}\label{claws}
\begin{aligned}
0=&\rho_t+\text{div}(\rho V)  \\
0=&(\rho V)_t+\text{div}(\rho V\otimes V+(p+\frac12|B|^2)I-B\otimes B)\\
0=&B_t+\text{div}(B\otimes V-V\otimes B).
\end{aligned}
\end{equation}
%where, with $V=(v_1,v_2), B=(b_1,b_2),$
%\begin{equation*}\label{dyades}
%V\otimes B=\begin{pmatrix}
%           v_1b_1 &v_1b_2\\ v_2b_1&v_2 b_2
%           \end{pmatrix},
%\quad\text{div}(V\otimes B)
%=
%\begin{pmatrix}
%           (v_1b_1)_x &(v_1b_2)_y\\ (v_2b_1)_x&(v_2 b_2)_y
%           \end{pmatrix}
%quad\text{et cetera.} 
%end{equation*}
The dependent variables $\rho>0,p>0,V\in\R^2$ denote the fluid's density, pressure, 
and velocity. In addition to \eqref{claws}, the magnetic field $B\in\R^2$ satisfies 
\begin{equation}\label{divzero}
\text{div}\,B=0.
\end{equation}
The fluid is assumed to be polytropic, $p=R\rho T$, and have a constant temperature $T$, so 
that $p=c^2\rho$ with constant sound speed $c$. 
By scaling, we assume without loss of generality that 
\begin{equation}\label{pisrho}
p=\rho,\quad\text{i.\ e., the speed of sound is }1.
\end{equation}
We abbreviate \eqref{claws} as
\begin{equation}\label{clawsbrief}
U_t+F(U)_x+G(U)_y=0
\end{equation}
with 
$$
U=\begin{pmatrix}
  \rho\\ \rho v_1\\ \rho v_2\\b_1\\b_2 
  \end{pmatrix},\quad
F(U)=\begin{pmatrix}
      \rho v_1\\\rho v_1v_1+p+\frac12(b_2^2-b_1^2)\\\rho v_1v_2-b_1b_2\\0\\b_2v_1-v_2b_1
     \end{pmatrix},\quad
G(U)=\begin{pmatrix}
      \rho v_2\\\rho v_2v_1-b_2b_1\\\rho v_2v_2+p+\frac12(b_1^2-b_2^2)\\b_1v_2-v_1b_2 \\0
      \end{pmatrix}.
$$
Using \eqref{divzero}, we also write it as a symmetric hyperbolic system\footnote{This issue has
an interesting history, cf.\ \cite{LaxSH,Godunov}.},
\begin{equation}\label{symmhyp}
{\bf D}(\tilde U)\tilde U_t+\tilde{\bf A}(\tilde U)\tilde U_x+\tilde{\bf B}(\tilde U)\tilde U_y=0
\end{equation}
with $\tilde U=(\rho,v_1,v_2,b_1,b_2)^\top$, ${\bf D}(\tilde U)=\text{diag}(1/\rho,\rho,\rho,1,1)$, and 
\begin{equation*}\label{symmhypmatrices}
\tilde{\bf A}(\tilde U)=\begin{pmatrix}
                         v_1/\rho&1&0&0&0\\
                         1&\rho v_1&0&0&b_2\\
                         0&0&\rho v_1&0&-b_1\\
                         0&0&0&v_1&0\\
                         0&b_2&-b_1&0&v_1
                        \end{pmatrix},\quad
\tilde{\bf B}(\tilde U)=\begin{pmatrix}
                         v_2/\rho&0&1&0&0\\
                         0&\rho v_2&0&-b_2&0\\
                         1&0&\rho v_2&b_1&0\\
                         0&-b_2&b_1&v_2&0\\
                         0&0&0&0&v_2
                        \end{pmatrix}.
\end{equation*}
Applying the chain rule, we rewrite \eqref{symmhyp} as 
\begin{equation}\label{quasilin}
 U_t+{\bf A}(U)U_x+{\bf B}(U)U_y=0, 
\end{equation}
where
\begin{equation}
%U=(\rho,\rho V,B)\quad\text{and}\quad
{\bf A}={\bf T}{\bf D}^{-1}\tilde{{\bf A}}{\bf T}^{-1},\quad 
{\bf B}={\bf T}{\bf D}^{-1}\tilde{{\bf B}}{\bf T}^{-1}
\end{equation}
with
\begin{equation}
{\bf T}=\frac{\partial U}{\partial \tilde U}
=
\begin{pmatrix}
1&0&0\\
V&\rho I_2&0\\
0&0&I_2 
\end{pmatrix}.
\end{equation}
Note that, as we have used \eqref{divzero} on the way from \eqref{claws}
to \eqref{symmhyp}, 
the matrices $\bf A$ and $\bf B$ in \eqref{quasilin} 
are \emph{not} the Jacobians of the fluxes $F$ and $G$.

\section{Slow and fast, parallel and non-parallel shock waves} 
Ideal MHD shock waves, in their prototypical form, have the structure
\begin{equation*}
U(t,x,y)=\begin{cases}
                  U^-=(\rho^-,\rho^-V^-,H^-),& (x,y)\cdot N < st,\\
                  U^+=(\rho^+,\rho^+V^+,H^+),& (x,y)\cdot N > st,
                \end{cases}
 \end{equation*}
where $N=(N_1,N_2)\in S^1$ is the direction of propagation and $s$ the speed of the shock wave.
Function \eqref{weaksolution} being a weak solution of \eqref{claws} is equivalent
to the Rankine-Hugoniot conditions
\begin{equation*}%\label{RHgeneral}
-s(U^+-U^-)+N_1(F(U^+)-F(U^-))+N_2(G(U^+)-G(U^-))=0. 
\end{equation*}
Due to rotational and Galilean invariance it is without loss of generality that 
we henceforth assume that 
$$
N=(1,0)\quad\text{and}\quad s=0;
$$ 
i.\ e. we exclusively consider shock waves of the form 
\begin{equation}\label{weaksolution}
 U(t,x,y)=\begin{cases}
                  U^-=(\rho^-,\rho^-V^-,H^-),& x < 0,\\
                  U^+=(\rho^+,\rho^+V^+,H^+),& x > 0,
                \end{cases}
\end{equation}
and the Rankine-Hugoniot conditions read
\begin{equation}\label{RH}
F(U^-)=F(U^+). 
\end{equation}
Note now first that for waves \eqref{weaksolution}, as for any solutions of \eqref{claws}
whose spatial dependence is only via $x$, the divergence-free condition \eqref{divzero}
reduces to 
\begin{equation}\label{a}
b_1=a,\quad a \text{ any constant.} 
\end{equation}
We assume \eqref{a} and simply write $b,v,w$ instead of $b_2,v_1,v_2$. 

In this paper, we are interested in Lax shocks.
Following \cite{Lax,Cabannes,FrDiss,FrRohde}, two states 
\begin{equation}\label{twostates}
U^-=(\rho^-,\rho^-v^-,\rho^-w^-,a,b^-)
\quad\text{and}\quad 
U^+=(\rho^+,\rho^+v^+,\rho^+w^+,a,b^+)
\end{equation}
that satisfy the Rankine-Hugoniot conditions \eqref{RH} constitute a
\begin{equation}\label{slowshock}
\text{slow Lax shock iff }0< \rho^+(v^+)^2<\rho^-(v^-)^2<a^2 
\end{equation}
and a 
\begin{equation}\label{fastshock}
 \text{fast Lax shock iff } a^2<\rho(v^+)^2<\rho^-(v^-)^2.
\end{equation}
Two states \eqref{twostates} do
satisfy the Rankine-Hugoniot conditions \eqref{RH} 
if and only if the two quadruples
$
(\rho^-,v^-,w^-,b^-)
$ and 
%\quad\text{and}\quad
$(\rho^+,v^+,w^+,b^+)$
have coinciding images under the mapping
\begin{equation}\label{RHmap}
\begin{pmatrix}
\rho\\ v\\ w\\ b 
\end{pmatrix}
\mapsto
\begin{pmatrix}
\rho v\\
\rho v^2+\rho+\frac12b^2\\
\rho v w-ab\\ bv-aw 
\end{pmatrix}
\end{equation}
that $F$ induces by omitting its forth, trivial component, in other words if both quadruples
satisfy the four equations
\begin{eqnarray}
\rho v&=&m\label{RHred1}\\
\rho v^2+\rho+\frac12 b^2&=&j\label{RHred2}\\
vb-aw&=&c \label{RHred3}\\
mw-ab&=&d\label{RHred4}
\end{eqnarray}
for the same values of the four parameters $m,j,c,d\in\R$.
As simple arguments\footnote{reversing, shifting, scaling, and the observation that cases with $m=0$ %and $a=0$ 
or $\rho v^2=a^2$ give no Lax shocks} 
show, we lose no generality in assuming that 
\begin{equation}
m> 0,\quad d=0,\quad\text{and } \rho v^2\neq a^2.
\end{equation}
Using \eqref{RHred4} in \eqref{RHred3} and inserting the result and \eqref{RHred1} 
in \eqref{RHred2} then yields \begin{equation}
g^{amc}(v)\equiv m\frac{1+v^2}v+\frac12\bigg(\frac{mc}{mv-a^2}\bigg)^2=j,\ \  \hbox{to be solved for } v\in
(0,a^2/m)\cup(a^2/m,\infty).
\label{geq}
\end{equation}
As for every solution $v$ of \eqref{geq}, relations \eqref{RHred1}, \eqref{RHred3},
\eqref{RHred4} provide unique associated values for $v,w$ and $b$, understanding \eqref{geq}
will give a complete picture. One distinguishes two cases.

\emph{$c = 0$: parallel shocks.} 
In this case, \eqref{geq} has two
solutions 
$$
v^\pm=\frac12\bigg(\frac jm\mp\sqrt{\big(\frac jm\big)^2-4}\bigg)\quad\text{if }\frac jm> 2,
\quad\text{with }0<v^+<v^-.
$$
The corresponding states \eqref{twostates} constitute a 
\begin{equation}\label{slowLax}
\text{slow parallel shock iff } mv^-<a^2
\end{equation}
and a
\begin{equation}
\text{fast parallel shock iff } a^2<mv^+.  
\end{equation}
The fact that the value of $a$ has no influence on the $\rho,v,w,b$ components 
of parallel shocks is easily understood by noticing that they have $b=w=0$
and thus are purely gas dynamical.

\emph{$c\neq 0$: non-parallel shocks.} In this case, $g^{amc}$ tends to $\infty$ 
not only for $v\searrow0$ und $v\nearrow\infty$, but also 
for $v \to a^2/m$. Thus for every 
$$
j>j^s_{min}(a,m,c)=\min_{(0,a^2/m)}g^{amc},
$$
\eqref{geq} has two solutions
$$
v^+_s(a,m,c,j)<v^-_s(a,m,c,j)<a^2/m
$$
that consitute a slow shock. Similarly, for every 
$$
j>j^f_{min}(a,m,c)=\min_{(a^2/m,\infty)}g^{amc},
$$
there are two solutions
$$
a^2/m<v^+_f(a,m,c,j)<v^-_f(a,m,c,j)
$$
that define a fast shock.
%\newpage

\section{Lopatinski determinant and critical manifold}  
According to Majda's theory\footnote{This is what our passing, in Sec.\ 1, through the 
symmetric hyperbolic formulation \eqref{symmhyp} is needed for.}
\cite{Majda1,Majda2} on the persistence of shock fronts,
the local-in-time stability of the planar discontinous wave \eqref{weaksolution}
is determined by the behaviour of the Lopatinski determinant
\begin{equation}
\Delta:
S_+=(\overline{\C_+}\times\R)\setminus \{(0,0)\}\to\C,\quad
 \Delta(\lambda,\omega):=\det(R^-(\lambda,\omega), J(\lambda,\omega),R^+(\lambda,\omega)),
%\quad (\lambda,\omega)\in , 
\end{equation}
where $\C_+=\{\lambda\in\C:\Re \lambda>0\}$.
While \emph{uniform stability}\ corresponds to the non-vanishing of $\Delta$ on all of $S_+$,
shocks with
\begin{equation}
\emptyset\neq\Delta^{-1}(0)\subset i\R\times\R 
\qquad\text{or}\qquad
\emptyset\neq\Delta^{-1}(0)\cap(\C_+\times \R)
\end{equation}
are \emph{neutrally stable}\ or \emph{strongly unstable}, respectively. 
The ingredients of the Lopatinski determinant are
\begin{equation*}
 \begin{aligned}
  &J(\lambda,\omega):= \lambda(U^+-U^-)+i\omega(G(U^+)-G(U^-)), \text{``jump vector''},\\
  &R^-(\lambda,\omega), \text{ base of the stable space of }
    {\bf L}^-:=(\lambda {\bf I}+i\omega {\bf B}^-)({\bf A}^-)^{-1},\\
  &R^+(\lambda,\omega), \text{ base of the unstable space of } 
       {\bf L}^+:=(\lambda {\bf I}+i\omega {\bf B}^+)({\bf A}^+)^{-1},
 \end{aligned}
\end{equation*}
where ${\bf A}^{\pm}, {\bf B}^{\pm}$ denote ${\bf A}(U^{\pm}), {\bf B}(U^{\pm})$.
The theory of hyperbolic initial-boundary value problems \cite{Kreiss, Majda1} implies that 
$R^\pm$ are well-defined bundles of constant dimension. To be precise, 
it is on 
$\C_+\times \R=S_+\setminus(i\R\times\R)$
that the 
\emph{Lopatinski matrices}\ ${\bf L}^\pm$ have constantly trivial neutral
spaces and thus ``consistent splitting'', i.\ e., stable and unstable spaces 
of constant dimensions, so that in particular
$$
d^-=\dim\text{span } R^-(\lambda,\omega)
\quad \text{and}\quad 
d^+=\dim\text{span } R^+(\lambda,\omega)
$$
are constant; for points $(\lambda,\omega)\in S_+$ with purely imaginary values of $\lambda$, 
the $R^\pm(\lambda,\omega)$ are defined as limits  from the interior of $S_+$ \cite{Kreiss}.
From the one-dimensional `Lax counting' of characteristic speeds \cite{Lax,Cabannes}, we know that 
\begin{equation}
d^-=1\quad\text{and}\quad d^+=3\quad \text{for slow MHD shocks},  
\end{equation}
while 
\begin{equation}
d^-=0\quad\text{and}\quad d^+=4\quad \text{for fast MHD shocks}.
\end{equation}
The Lopatinski determinant $\Delta$ being degree-one homogeneous in $(\lambda,\omega)$, 
we from now on fix the transverse wave number to
$$
\omega=\pm 1. 
$$
To avoid abundant notation, we also fix from now, again without loss of generality, 
\begin{equation}\label{rhois1}
 \rho^-=1
\end{equation}
and use the two parameters $\rho^+,c$ instead of the three paramters $j,m,c$. For parallel shocks, our choice
\eqref{rhois1} implies
$$
v^-=\sqrt{\rho^+},\quad v^+=1/\sqrt{\rho^+}. 
$$

In this paper we concentrate on slow shocks.\footnote{Cf.\ Trakhinin's paper 
\cite{Trakhinin} (and also \cite{Filippova,FrTrakhinin}) for other results.}
The following is a key observation.
\begin{theo}
For slow parallel MHD shocks in \eqref{claws},\eqref{divzero} with \eqref{pisrho}
and $\rho^-=1$, 
\begin{equation}
\Delta(0,\pm 1)=0 \quad\text{if}\quad {\rho^+}=\frac {a^2+2}{a^2+1}. 
\end{equation}
%For fast parallel MHD shocks,
%$
%\Delta(0,\pm 1)=0 
%$
%cannot happen.
\end{theo}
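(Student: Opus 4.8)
\emph{Proof sketch.} The plan is to evaluate $\Delta(0,\pm1)$ directly from the definition. Parallel shocks being purely gas-dynamical, the two states are $U^\pm=(\rho^\pm,\rho^\pm v^\pm,0,a,0)^\top$ with $\rho^-=1$, $v^-=\sqrt{\rho^+}$, $v^+=1/\sqrt{\rho^+}$, and \eqref{slowLax} reads $1<\rho^+<a^2$; fix $\omega=\pm1$. Since $G(U^\pm)=(0,0,\rho^\pm+\tfrac12a^2,0,0)^\top$, the jump vector degenerates to $J(0,\pm1)=\pm i(\rho^+-1)e_3$, a nonzero multiple of the third standard basis vector. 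Hence $\Delta(0,\pm1)=0$ exactly when the columns of $R^-(0,\pm1)$ and $R^+(0,\pm1)$, with their third ($\rho v_2$-) components deleted, become linearly dependent — a $4\times4$, ultimately $3\times3$, condition.

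I would next identify $R^\pm(0,\pm1)$ as limits from $\{\Re\lambda>0\}$. From $\lambda\mathbf I+\mu\mathbf A+i\omega\mathbf B=\mathbf T\mathbf D^{-1}(\lambda\mathbf D+\mu\tilde{\mathbf A}+i\omega\tilde{\mathbf B})\mathbf T^{-1}$ one works in primitive variables and finds, at $\lambda=0$, the dispersion function $\det(\mu\tilde{\mathbf A}^\pm+i\omega\tilde{\mathbf B}^\pm)=v^\pm\mu^3Q^\pm(\mu)$ with $Q^\pm(\mu)=((v^\pm)^2-1)(\rho^\pm(v^\pm)^2-a^2)\mu^2+((v^\pm)^2-1)a^2+\rho^\pm(v^\pm)^2$. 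Thus $\mathbf L^\pm(0)=i\omega\mathbf B^\pm(\mathbf A^\pm)^{-1}$ carries the eigenvalue $0$ with algebraic multiplicity $3$ (the factor $\mu^3$) and equal geometric multiplicity — since $\operatorname{rank}\mathbf B^\pm=2$, $E_0^\pm:=\ker\mathbf L^\pm(0)=\mathbf A^\pm\ker\mathbf B^\pm=\mathbf T^\pm(\mathbf D^\pm)^{-1}\tilde{\mathbf A}^\pm\ker\tilde{\mathbf B}^\pm$ is three-dimensional, so $0$ is semisimple there — together with two further eigenvalues $\pm\mu_*^\pm$, $(\mu_*^\pm)^2=-Q^\pm(0)/\big(((v^\pm)^2-1)(\rho^\pm(v^\pm)^2-a^2)\big)$. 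A short computation gives $(\mu_*^-)^2>0$ and $(\mu_*^+)^2=\big((\rho^+-1)a^2-\rho^+\big)/\big((\rho^+-1)(a^2-1)\big)$, which is negative on and near the surface in question (there $(\rho^+-1)a^2-\rho^+=2-2\rho^+<0$). In this regime first-order perturbation of the semisimple zero eigenvalue shows that the three eigenvalues of $\mathbf L^+(\lambda)$ bifurcating from $0$ behave like $\lambda\sigma_j$, where the $\sigma_j$ are the eigenvalues of $\Pi_0(\mathbf A^+)^{-1}|_{E_0^+}$, $\Pi_0$ the spectral projection onto $E_0^+$ along the complementary $\mathbf L^+(0)$-invariant subspace (spanned by the eigenvectors at $\pm\mu_*^+$). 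Computing this $3\times3$ operator shows one $\sigma_j$ equals $v^->0$ while the other two have positive product and a sum with positive real part, so all three $\sigma_j$ have positive real part; with $d^+=3$ and a Lax-counting argument (whence the modes $\pm\mu_*^+$ must bifurcate into the stable half-plane) this forces $\operatorname{span}R^+(0,\pm1)=E_0^+$, which is explicit. By the same counting on the $-$ side, $R^-(0,\pm1)$ spans the eigenvector of $\mathbf L^-(0)$ at $+\mu_*^-$, whose first, second and fourth components are a multiple of $i\omega(v^-,\rho^+,av^-)$ independent of $\mu_*^-$.

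Writing $E_0^+=\operatorname{span}\{w_1,w_2,w_3\}$ with $w_1=(\rho^+,2v^-,0,0,0)^\top$, $w_2=(0,0,-a,0,v^+)^\top$, $w_3=(-av^+,-a((v^+)^2+1),0,v^+,0)^\top$, the condition of the first paragraph becomes $\det(R^-,e_3,w_1,w_2,w_3)=0$. Only $w_2$ has a nonzero fifth component and $e_3$ only a third one; expanding successively along the fifth and third rows and using $v^+v^-=1$, $(v^-)^2=\rho^+$ and $v^-((v^+)^2+1)=v^++v^-$ reduces the $5\times5$ determinant to $i\omega$ times
\[
\det\begin{pmatrix}1 & v^- & -a\\ v^- & 2 & -a(v^++v^-)\\ a & 0 & 1\end{pmatrix}=a^2(1-\rho^+)+(2-\rho^+)=(a^2+2)-\rho^+(a^2+1),
\]
which vanishes exactly for $\rho^+(a^2+1)=a^2+2$, as claimed.

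The crux is the second step — describing $R^+(0,\pm1)$ at the glancing value $\lambda=0$: one must control the first-order splitting of the triple but semisimple eigenvalue of $\mathbf L^+(0)$ and check, by Lax counting, that the three bifurcating modes are exactly $R^+$, so that $\operatorname{span}R^+$ equals the explicit space $\mathbf A^+\ker\mathbf B^+$. Bookkeeping between $U$ and $\tilde U$ also needs care; everything after that is a finite linear-algebra computation. The same formula for $\Delta(0,\pm1)$ is expected to persist in the complementary range $(\mu_*^+)^2>0$ (where $R^+$ is one regular plus two bifurcating modes), giving the full equivalence; and one checks that the critical surface lies in the slow-Lax window $1<\rho^+<a^2$ precisely for $a>2^{1/4}$.
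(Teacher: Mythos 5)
Your proposal is correct and in substance follows the paper's own route, which simply exhibits $R^-$, $R^+$, $J$ and evaluates the determinant: your $w_1,w_2,w_3$ span exactly the column space of the paper's $R^+$ (indeed $w_3=-\tfrac{a((v^+)^2+1)}{2}(\sqrt{\rho^+},2,0,0,0)^\top+\tfrac{v^+}{2}(a(\rho^+-1),0,0,2,0)^\top$), your description of $R^-$ agrees with the paper's vector (components $1,2,4$ proportional to $(1,v^-,a)$ and, as you tacitly use in the row expansion, vanishing fifth component), and your $3\times3$ determinant reproduces the paper's $\Delta(0,1)\propto \rho^+(a^2+1)-(a^2+2)$, so what you add is essentially the derivation the paper hides behind ``interesting manipulations''. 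The steps you assert without displaying do check out: the degree-three part of $\det(\lambda\mathbf D-\mu\tilde{\mathbf A}+i\omega\tilde{\mathbf B})$ at $U^+$ is $\omega^2(\lambda-\mu v^+)\bigl((a^2+\rho^+)(\lambda-\mu v^+)^2-a^2\mu^2\bigr)$, giving $\sigma_1=1/v^+=v^-$ and $\sigma_{2,3}=\bigl(v^+\pm a/\sqrt{a^2+\rho^+}\bigr)^{-1}$, all real and positive precisely when $(\rho^+-1)a^2-\rho^+<0$, i.e.\ in your regime $(\mu_*^+)^2<0$ containing the critical surface; and taking the limit along real $\lambda\downarrow 0$ (legitimate by continuity of the Kreiss extension) together with the continuity of the spectral projection onto the cluster of eigenvalues near $0$ makes the counting argument airtight, so indeed $\operatorname{span}R^+(0,\pm1)=\mathbf A^+\ker\mathbf B^+$ as you claim.
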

\begin{proof}
Interesting manipulations show that one can take 
$$
R^-
=
\begin{pmatrix}
 1\\
 \sqrt{\rho^+}\\
-i\sqrt{\bigg({a^2-\rho^+}\bigg)\bigg(\frac{a^2}{\rho^+}-\frac1{1-\rho^+}\bigg)}\\
a\\
0
\end{pmatrix},\quad
R^+
=
\begin{pmatrix}
                \sqrt{\rho^+}&a(\rho^+-1)&0\\
                 2&0&0\\
                 0&0&-a\sqrt{\rho^+}\\
                 0&2&0\\ 
                 0&0&1
                \end{pmatrix}.
$$
Together with $J=(0,0,i,0,0)$, this yields
$$
\Delta(0,1)=2i[\rho^+(a^2+1)-(a^2+2)].
$$
\end{proof}
\begin{figure}[ht]\center
\includegraphics[scale=0.9]{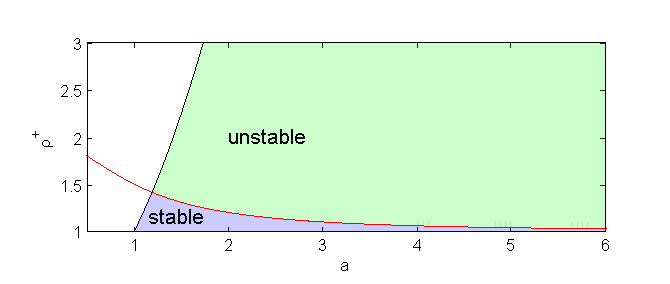}
\caption{Slow parallel shock with $\rho^-=1$, cf.\ Theorem 1. The black boundary is the Lax condition 
\eqref{slowLax}: $\rho^-(v^-)^2<a^2$.}\label{thm1}
\end{figure}

\section{Symmetry breaking}  
The situation of parallel shocks is degenerate as it possesses a reflectional
symmetry in the transverse ($y$-)direction. For the Lopatinski determinant this symmetry
means that 
$
\Delta(\lambda,-\omega)
$
vanishes exactly if 
$
\Delta(\lambda,\omega)
$
does. Perturbing the parameter $c$ away from $0$ breaks this symmetry,
and the zero of $\Delta$ that we found, for $c=0$ at $\lambda=0$, splits. 
 
For all values of $\rho^+,a,c$ that permit a (then unique) slow MHD shock wave,
we write $\Delta^{\rho^+,a,c}$ for the corresponding Lopatinski determinant. 
Starting from Theorem 1, we found the following.

\begin{numobs}
There are an $\epsilon>0$ and 
%(i) a smooth even function $a_{\text{min}}:(-\epsilon,\epsilon)$ 
%with $a_{min}(0)=2^\frac14$
%and (ii) 
two functions,
$$
R(a,c), \text{even in }c \text{ and with } R(a,0)=\frac{a^2+2}{a^2+1},
\quad\text{and}\quad
\gamma(a,c), \text{odd in }c,
$$
both defined on $\Omega_\epsilon=\{(a,c):a\ge a_{\text{min}}(c),-\epsilon<c<\epsilon\}$, 
such that 
$$
\Delta^{R(a,c),a,c}(\pm i\,\gamma(a,c),\pm 1)=0\quad\text{for all } 
c\in(-{\epsilon},{\epsilon}).
$$
\end{numobs}
\begin{figure}[ht]\center
\includegraphics[scale=0.9]{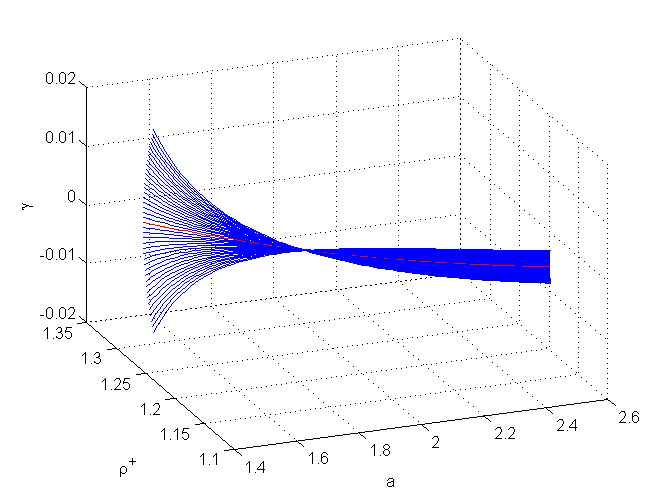}
\caption{Curves $a\mapsto (R(a,c),\gamma(a,c))$ for some values of $c$ between $-0.01$ and $0.01$.
The red curve corresponds to $c=0$ and thus to the red curve in Fig.\ 1.}\label{thm1}
\end{figure}
A detailed description of the numerics is postponed to a later publication.

\newpage
\section{Emergence of unstable modes} 
Do unstable modes emerge in families of shock waves that correspond to
parameter values which cross the critical manifold? The following is what 
we conclude from numerical computations.
\begin{numobs}
There are a $\delta>0$ and a smooth function $\alpha+i\beta:
\Omega_\epsilon\times[0,\delta)\to \C$ with
$$
\alpha(a_0,c,0)=0 \quad \text{and}\quad  \beta(a_0,c,0)=\gamma(a_0,c)
$$
and 
$$
\alpha(a_0,c,\xi)>0\quad\text{for }\xi>0
$$
such that 
$$
\Delta^{R(a_0,c),a_0+\xi,c}(\alpha(a_0,c,\xi)\pm i\,\beta(a_0,c,\xi),\pm 1)=0\quad\text{for all } 
\xi\in[0,{\delta}).
$$
\end{numobs}
This means that for $\xi>0$, 
$$
\lambda=\alpha\pm i\,\beta
$$
is an unstable eigenvalue for $\omega=\pm 1$.
\begin{figure}[ht]\center
\includegraphics[scale=0.9]{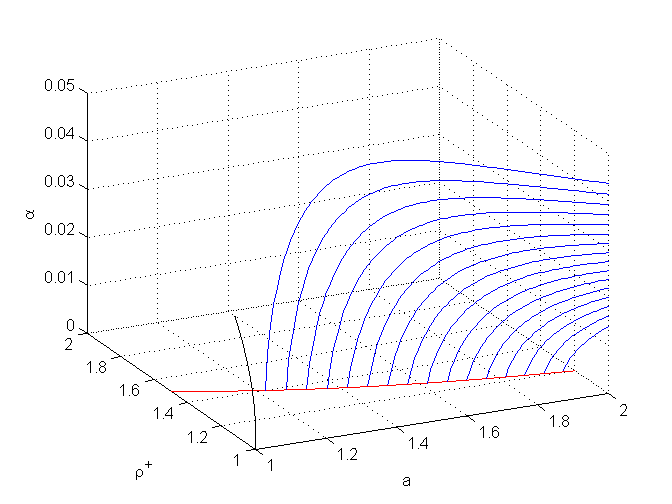}
\caption{Curves $\xi\mapsto (a_0+\xi,R(a_0,0),\alpha(a_0,0, \xi))$
for various values of $a_0$.}
%The red curve corresponds to $c=0$ and thus to the red curve in Fig.\ 1.}\label{thm1}
\end{figure}
\par\medskip
A detailed description of the numerics is again postponed to a later publication.
\newpage
{\bf Remark}. Both from a physics perspective and as the Evans function for non-ideal shock waves is 
intimately related to the Lopatinski determinant for their non-ideal counterparts \cite{ZumbrunSerre},
one expects the {\it gallopping instability}\ described in this paper to occur also in the presence
of viscosity and and electrical resistivity.


\begin{thebibliography}{99}
\bibitem{Cabannes} H. Cabannes: {\it Theoretical Magnetofluiddynamics}. Academic Press: New York etc., 1970.

\bibitem{Filippova} O. L. Filippova: Stability of plane MHD shock waves in an ideal gas. 
{\it Fluid Dyn.}\ 26 (1991), 897--904.

\bibitem{FrDiss}
H. Freist\"uhler: {\it Anomale Schocks, strukturell labile L\"osungen und die Geometrie
der Rankine-Hugoniot-Bedingungen}.
Ph.D. Thesis, Ruhr-Universit\"at Bochum 1987.

\bibitem{FrRohde}
H. Freist\"uhler, C. Rohde: 
The bifurcation analysis of the MHD Rankine-Hugoniot equations for a perfect gas. 
{\it Phys.\ D} 185 (2003), 78--96.

\bibitem{FrTrakhinin}
H. Freist\"uhler, Y. Trakhinin: On the viscous and inviscid stability of 
magnetohydrodynamic shock waves. {\it Phys. D} 237 (2008), 3030--3037.

\bibitem{Godunov} 
S. K. Godunov: Lois de conservation et int\'egrales d'\'energie des \'equations 
hyperboliques. {\it Nonlinear Hyperbolic Problems, Proceedings St.\ Etienne 1986}.
135-148. Lecture Notes in Mathematics 1270, Springer: Berlin etc., 1987.

\bibitem{Kreiss}
H.-O. Kreiss: Initial boundary value problems for hyperbolic systems. 
{\it Comm.\ Pure Appl.\ Math.} 23 (1970), 277--298.

\bibitem{Lax}
P. D. Lax: Hyperbolic systems of conservation laws. II. 
{\it  Comm.\ Pure Appl.\ Math.}\ 10 (1957), 537--566.

\bibitem{LaxSH}
P. D. Lax: On symmetrizable hyperbolic differential equations.
{\it Nonlinear Hyperbolic Problems, Proceedings St.\ Etienne 1986}.
150-151,
Lecture Notes in Mathematics 1270, Springer: Berlin etc., 1987.

\bibitem{Majda1}
A. Majda: The existence of multidimensional shock fronts. {\it Mem.\ Amer.\ Math.\ Soc.}\ 
43 (1983), no. 281, v+93 pp. 

\bibitem{Majda2}
A. Majda: 
The stability of multidimensional shock fronts. {\it Mem.\ Amer.\ Math.\ Soc.}\ 41 (1983), 
no. 275, iv+95 pp. 

%\bibitem{Metivier}
%G. M\'etivier: 
%Stability of multidimensional shocks. {\it Advances in the theory of shock waves}, 
%25--103. Progr.\ Nonlinear Differential Equations Appl.\ 47, Birkh\"auser: Boston 2001.

\bibitem{Trakhinin} Y. Trakhinin:
A complete 2D stability analysis of fast MHD shocks in an ideal gas. 
{\it Comm.\ Math.\ Phys.}\ 236 (2003), 65--92. 

\bibitem{ZumbrunSerre}
K. Zumbrun, D. Serre: Viscous and inviscid stability of multidimensional planar
 shock fronts. {\it Indiana Univ.\ Math.\ J.}\ 48 (1999),937--992.

\end{thebibliography}
\end{document}